\newtheorem{theorem}{Theorem}[section]
\newtheorem{cor}[theorem]{Corollary}
\theoremstyle{definition}
\theoremstyle{remark}
    \newcommand{\st}{^{\textstyle {\ast }}}
   \newcommand{\ph}{\mbox{$\varphi$}}
    \renewcommand{\phi}{\varphi}
   \newcommand{\Ht}{\mbox{$H^{2}$}}
   \newcommand{\Hi}{\mbox{$H^{\infty}$}}
   \newcommand{\D}{\mbox{$\mathbb{D}$}}
   \newcommand{\C}{\mbox{$C_{\varphi}$}}
   \newcommand{\Cs}{\mbox{$C_{\varphi}\st$}}
   \newcommand{\W}{\mbox{$W_{\psi,\varphi}$}}
   \newcommand{\Ws}{\mbox{$W_{\psi,\varphi}^{\textstyle\ast}$}}
 	\newfont{\caps}{cmcsc9}  % for Authors
 	\newfont{\jour}{cmti9}  % for Journal Titles
\begin{document}

%% Title, authors and addresses

%% use the tnoteref command within \title for footnotes;
%% use the tnotetext command for theassociated footnote;
%% use the fnref command within \author or \address for footnotes;
%% use the fntext command for theassociated footnote;
%% use the corref command within \author for corresponding author footnotes;
%% use the cortext command for theassociated footnote;
%% use the ead command for the email address,
%% and the form \ead[url] for the home page:
%% \title{Title\tnoteref{label1}}
%% \tnotetext[label1]{}
%% \author{Name\corref{cor1}\fnref{label2}}
%% \ead{email address}
%% \ead[url]{home page}
%% \fntext[label2]{}
%% \cortext[cor1]{}
%% \address{Address\fnref{label3}}
%% \fntext[label3]{}

\title{Complex Symmetric Composition Operators on $H^2$}

%% use optional labels to link authors explicitly to addresses:
%% \author[label1,label2]{}
%% \address[label1]{}
%% \address[label2]{}

\author{Sivaram K. Narayan, Daniel Sievewright, Derek Thompson}

\maketitle

\begin{abstract}
In this paper, we find complex symmetric composition operators on the classical Hardy space $H^2$ whose symbols are linear-fractional but not automorphic. In doing so, we answer a recent question of Noor, and partially answer the original problem posed by Garcia and Hammond. 
\end{abstract}

%% \linenumbers

%% main text

%%

\section{Introduction}

In this paper, we are interested in composition operators on the classical \textit{Hardy space} \Ht. This Hilbert space is given by the set of analytic functions $\displaystyle f = \sum_{n=0}^{\infty}a_n z^n$ on the open unit disk \D\ such that $$\|f\|^{2}=\sum_{n=0}^{\infty}|a_n|^{2}<\infty.$$ A \textit{composition operator} $C_{\ph}$ on \Ht\ is given by $C_{\ph}f = f \circ \ph$. When \ph\ is an analytic self-map of \D, \C\ is necessarily bounded. A \textit{Toeplitz operator} $T_\psi$ on $H^2$ is given by $T_\psi f = \psi f$ when $\psi \in \Hi$, the space of bounded analytic functions on \D. We occasionally write $\W = T_\psi \C$ and call such an operator a \textit{weighted composition operator}. When \ph\ is linear-fractional, \Cs\ has a simple form which involves composition operators and (adjoints of) Toeplitz operators, which we will put to use later in this paper.

The weak normality properties of composition operators and weighted composition operators on \Ht\ have been of great interest. For example, work has been done on when \C\ and \W\ are normal \cite{Bourdon}, subnormal \cite{ck}, and cohyponormal \cite{coko}. In \cite{Hammond}, Garcia and Hammond posed the following problem: ``Characterize all complex symmetric composition operators \C\ on the classical Hardy space \Ht.'' An operator $T$ on a Hilbert space $\mathcal{H}$ is complex symmetric if there exists a conjugate-linear, isometric involution $J$ on $\mathcal{H}$ so that $T = J T^* J$. We call such an operator $J$ a \textit{conjugation}. For applications and further information about complex symmetric operators, see \cite{Garcia1}, \cite{Garcia2} and \cite{GW}.

Due to a result of Garcia and Wogen \cite{GW}, any automorphism of order $2$ is complex symmetric. In \cite{Bourdon1}, Bourdon and Noor found that other automorphisms cannot be complex symmetric except possibly for the unsolved case when \ph\ is an elliptic automorphism of order 3. In \cite{Noor1}, Noor found the exact conjugation needed for the involutive disk automorphisms to induce complex symmetric composition operators. Jung, Kim, Ko, and Lee were thought to have found a non-automorphic example in \cite{jkkl}, but were disproved by Noor in \cite{Noor2}. Noor stated that even a narrower question remains open: does there exist a non-constant and non-automorphic symbol \ph\ for which \C\ is complex symmetric but not normal on \Ht?

In this paper, we give examples of linear-fractional, non-automorphic maps \ph\ that induce complex symmetric composition operators on $H^2$. In doing so, we answer positively the question of Noor, and we answer in part the question originally posed by Garcia and Hammond. We summarize our results with the following statement.

\begin{theorem}[Summary] Let $a, b \in \D$ with $|a|+|b(1-a)| \leq 1$.  Then for $\sigma(z) = az + b(1-a)$ and $\ph(z) = az/(1-b(1-a)z)$, $C_{\sigma}$ and \C\ are complex symmetric on \Ht. \end{theorem}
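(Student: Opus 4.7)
The plan is to construct, for each of $\sigma$ and $\ph$, an explicit conjugation $J$ on $\Ht$ and verify the complex-symmetry identity $C_\psi = JC_\psi^* J$ by direct calculation. The natural tool, foreshadowed in the introduction, is Cowen's adjoint formula for linear-fractional symbols: $C_\psi^*$ decomposes as a composition operator with a linear-fractional ``Krein adjoint'' symbol sandwiched with (adjoints of) Toeplitz operators. Writing $c = b(1-a)$, a direct computation shows that the Krein adjoint of $\sigma(z)=az+c$ has the form $\bar a z/(1-\bar c z)$ while the Krein adjoint of $\ph(z)=az/(1-cz)$ has the form $\bar a z + \bar c$. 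Thus the Krein adjoint of $\sigma$ shares the shape of $\ph$ and vice versa, a reciprocal duality that strongly suggests the two conjugations can be produced from a single template.

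For each symbol I would posit $J$ of the form
\[ Jf(z) = w(z)\,\overline{f(\overline{\tau(z)})}, \]
where $\tau$ is an involutive disk automorphism (perhaps the identity, perhaps the involution fixing $b$) and $w$ is an analytic weight. Conjugate-linearity is automatic, while $J^2=I$ forces $\tau$ to satisfy $\overline{\tau(\overline{\tau(z)})}=z$ together with a multiplicative cocycle identity for $w$, and isometry on $\Ht$ imposes a further boundary compatibility. I would then verify $JC_\psi^* J = C_\psi$ by testing on the reproducing kernels $K_\lambda(z)=1/(1-\bar\lambda z)$, using $C_\psi^* K_\lambda = K_{\psi(\lambda)}$ together with the explicit action of $J$ and the Cowen decomposition; this reduces the identity to an algebraic equation in $a,b,c,z,\lambda$ that should collapse using the relation $c=b(1-a)$.

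The main obstacle is pinning down the correct weight $w$ (and, if needed, a non-trivial $\tau$). The Toeplitz multipliers produced by Cowen's formula, namely $1/(1-\bar c z)$ for $C_\sigma^*$ and $(1-cz)$ for $C_\ph^*$, must be absorbed by $w$ in a manner still compatible with $J^2=I$; aligning the complex conjugates and checking isometry simultaneously is the delicate step. Once the right conjugation is identified for one of the two operators, the duality between their Krein adjoints should make it routine to adapt the construction to the other, so the bulk of the work is concentrated in finding a single well-chosen $J$.
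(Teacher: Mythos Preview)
Your ansatz $Jf(z)=w(z)\,\overline{f(\overline{\tau(z)})}$ with $\tau$ the disk involution fixing $b$ is precisely the conjugation the paper uses for $C_\sigma$: taking $w(z)=\zeta(z)=\sqrt{1-b^2}/(1-bz)$ one obtains $J=J_0 T_\zeta C_\tau$ (where $J_0 f(z)=\overline{f(\bar z)}$), and a direct calculation shows the accumulated weight factors cancel to $1$ while the composite symbol collapses to $\sigma$. So the core of your plan matches the paper. Two steps, however, are thinner than they need to be.

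\textbf{Real versus complex fixed point.} The cocycle condition you write down, $\overline{\tau(\overline{\tau(z)})}=z$, forces $\tau$ to have real coefficients; for $\tau_b(z)=(b-z)/(1-\bar b z)$ this means $b\in(-1,1)$. The paper therefore carries out the conjugation argument for $C_\sigma$ only when $b$ is real, and then reaches arbitrary $b\in\D$ by the unitary equivalence $U_{-\theta}C_{\sigma_1}U_\theta=C_\sigma$ with $U_\theta=C_{e^{i\theta}z}$, rotating the fixed point onto $(-1,1)$. Your proposal does not flag this reduction, and without it the template simply fails to produce an involution when $b\notin\mathbb{R}$.

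\textbf{Passing to $C_\phi$.} Your expectation that the Krein duality makes $C_\phi$ ``routine'' skips a genuine obstacle. Cowen's formula for $\phi(z)=az/(1-cz)$ gives $C_\phi^*=C_{\bar a z+\bar c}\,T_{1-cz}^*$, and $T_{1-cz}^*=I-\bar c\,T_z^*$ involves the backward shift, not an analytic multiplier that $w$ can absorb as before. The paper does \emph{not} adapt the template to $C_\phi$. Instead it argues indirectly: since $C_\sigma^*=T_\psi C_\phi$, the weighted operator $W_{\psi,\phi}$ is complex symmetric; since $\phi(0)=0$, the subspace $zH^2$ reduces $C_\phi$, and the unitary $T_z:H^2\to zH^2$ carries $aW_{\psi,\phi}$ onto $C_\phi|_{zH^2}$; hence $C_\phi=I\oplus C_\phi|_{zH^2}$ is complex symmetric via a direct-sum conjugation $J_1\oplus J_2$. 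This reducing-subspace maneuver is the idea missing from your plan for $C_\phi$.
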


Our strategy will be to first prove the result for linear maps $\sigma$ with real fixed point $b$, and from there, we will extend to complex fixed points and then to functions of the form found in \ph\ above. To that end, here we mention the Cowen adjoint formula for composition operators with linear-fractional symbol \cite[Theorem 9.2]{cm1}. The theorem is well-known but we restate it here due to its frequent use throughout the paper.

\begin{theorem}[Cowen adjoint formula]\label{adjoint} If $\sigma = \frac{az+b}{cz+d}$ is an analytic self-map of $\D$, then on $H^2$, $C_{\sigma}^{*} = T_g C_{\ph} T_{h}^{*}$, where $g = 1/(-\overline{b}z+\overline{d}), h = cz + d$, and $\ph = (\overline{a}z-\overline{c})/(-\overline{b}z+\overline{d})$, and $g, h$ necessarily belong to \Hi. The function \ph\ is called the \textit{Krein adjoint} of $\sigma$, while $g, h$ are called the \textit{Cowen auxillary functions} of $\sigma$.\end{theorem}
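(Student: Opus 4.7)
The plan is to verify the claimed identity by testing both operators on reproducing kernels. Recall that the reproducing kernel of $H^2$ at $w \in \D$ is $K_w(z) = 1/(1-\overline{w}z)$, and $\{K_w : w \in \D\}$ is a total subset of $H^2$. Moreover, from $\langle f, C_\sigma^* K_w\rangle = f(\sigma(w)) = \langle f, K_{\sigma(w)}\rangle$, one immediately obtains $C_\sigma^* K_w = K_{\sigma(w)}$. It therefore suffices to show $T_g C_\ph T_h^* K_w = K_{\sigma(w)}$ for every $w \in \D$.

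First I would record two preliminary facts that drive the computation: (i) for any $\psi \in H^\infty$, the adjoint of the Toeplitz operator acts on kernels by $T_\psi^* K_w = \overline{\psi(w)} K_w$ (a one-line inner-product check); and (ii) $C_\ph$ applied to any function evaluates that function at $\ph(z)$. Using these, I would peel off the three operators in turn:
\begin{align*}
T_h^* K_w &= \overline{h(w)} K_w, \\
(C_\ph T_h^* K_w)(z) &= \frac{\overline{h(w)}}{1 - \overline{w}\,\ph(z)}, \\
(T_g C_\ph T_h^* K_w)(z) &= \frac{g(z)\,\overline{h(w)}}{1 - \overline{w}\,\ph(z)}.
\end{align*}

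Next I would substitute the explicit formulas for $g$, $h$, and $\ph$ and simplify. A short calculation shows that
\[
1 - \overline{w}\,\ph(z) \;=\; \frac{(\overline{d}+\overline{c}\,\overline{w}) - (\overline{a}\,\overline{w}+\overline{b})z}{-\overline{b}z+\overline{d}},
\]
so the factor $(-\overline{b}z+\overline{d})$ coming from $g$ cancels the same factor in the denominator above. Multiplying by $\overline{h(w)} = \overline{c}\,\overline{w} + \overline{d}$ and dividing numerator and denominator by $\overline{c}\,\overline{w}+\overline{d}$ collapses the expression to $1/(1 - \overline{\sigma(w)}\,z) = K_{\sigma(w)}(z)$, as desired.

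The proof thus amounts to (a) invoking the reproducing-kernel characterization, (b) a mechanical application of the three operator actions, and (c) one algebraic simplification. The only subtlety I foresee is ensuring $g, h \in H^\infty$ and that $\ph$ maps $\D$ into itself so that the right-hand side is a bounded operator on $H^2$; for $h$ this is immediate, for $g$ it follows from $\sigma(0) = b/d \in \D$ together with (if $b \neq 0$) $|d/b| > 1$, and the self-map property of the Krein adjoint is a standard consequence of $\sigma$ being a disk self-map. Apart from these bookkeeping points, no serious obstacle is expected.
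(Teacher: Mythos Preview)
Your argument is correct: testing both sides on reproducing kernels is the standard route to the Cowen adjoint formula, and your algebra checks out --- the factor $-\overline{b}z+\overline{d}$ from $g$ cancels against the same factor in $1-\overline{w}\,\ph(z)$, and what remains collapses to $K_{\sigma(w)}$ after dividing through by $\overline{c}\,\overline{w}+\overline{d}$. The remarks about $g,h\in H^\infty$ and $\ph$ being a self-map are the right auxiliary facts to flag.

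That said, there is nothing to compare against here: the paper does not supply its own proof of this theorem. It is quoted as a well-known result (with a citation to Cowen--MacCluer, \emph{Composition Operators on Spaces of Analytic Functions}, Theorem~9.2) and restated only because it is used repeatedly later. So your proposal is not so much an alternative to the paper's argument as a reconstruction of the classical proof the paper takes for granted; the reproducing-kernel computation you give is essentially the one found in the cited reference.
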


\section{Results}

Let $J$ be the operator defined by $(Jf)(z) = \overline{f(\overline{z})}$. Then $J$ is a conjugation. We will consider when $J$ commutes with other involutions, so that the involution in our construction is conjugate-linear. 

\begin{theorem}\label{conjlinear} Suppose $J_1$ and $J_2$ are isometric involutions which commute, and exactly one of them is conjugate-linear. Then $J_1 J_2$ is a conjugation. \end{theorem}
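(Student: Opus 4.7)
The plan is to verify directly that $J_1 J_2$ satisfies the three defining properties of a conjugation: conjugate-linearity, isometric action, and being an involution. Each will follow from an elementary manipulation, so the main task is simply to organize the verifications cleanly; there is no serious obstacle here, though one does need to use the commuting hypothesis in the right place.

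First I would address conjugate-linearity. Since exactly one of $J_1, J_2$ is conjugate-linear and the other is linear, I would split into the two cases (or argue once and for all) that the composition of a linear map with a conjugate-linear map, in either order, is conjugate-linear. Concretely, for $\lambda \in \mathbb{C}$ and vectors $f,g$, I would check $J_1 J_2(\lambda f + g) = \overline{\lambda}\, J_1 J_2 f + J_1 J_2 g$ using that one factor pulls scalars out unchanged and the other conjugates them.

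Next I would handle the isometry condition. The appropriate notion for a conjugate-linear operator is $\|J_1 J_2 f\| = \|f\|$, and this follows immediately from applying the isometry of $J_2$ and then of $J_1$. Finally, for the involution property, I would compute
\[
(J_1 J_2)^2 = J_1 J_2 J_1 J_2 = J_1 J_1 J_2 J_2 = I,
\]
where the middle equality uses the commuting hypothesis $J_1 J_2 = J_2 J_1$ and the outer equalities use $J_i^2 = I$. Combining the three properties yields that $J_1 J_2$ is a conjugation, as desired.
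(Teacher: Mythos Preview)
Your proposal is correct and follows essentially the same approach as the paper: both verify directly that $J_1 J_2$ is isometric, involutive (via the commuting hypothesis), and conjugate-linear. The only cosmetic difference is the order of the three verifications and that the paper handles conjugate-linearity with a ``without loss of generality'' rather than explicitly mentioning both cases.
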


\begin{proof} A product of isometries is an isometry. Since $J_1 J_2 J_1 J_2 = J_1 J_2 J_2 J_1 = J_1 J_1 = I$, $J_1 J_2$ is an involution. Without loss of generality, say $J_1$ is conjugate-linear. Then $J_1 J_2 (af + bg) = J_1 (a J_2f + bJ_2g) = \overline{a} J_1 J_2f + \overline{b}J_1 J_2g$. Therefore, $J_1 J_2$ is a conjugation. \end{proof}

\begin{theorem}\label{commute} Suppose that $\psi \in H^{\infty}$ and $\ph: \D \rightarrow \D$ both map $(-1,1)$ into itself. Then $J$ commutes with $T_\psi$ and $C_{\ph}$ and therefore also with \W. \end{theorem}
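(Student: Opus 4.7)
The plan is to verify the two commutation relations $J T_\psi = T_\psi J$ and $J C_\varphi = C_\varphi J$ directly, and then deduce the statement for $W_{\psi,\varphi} = T_\psi C_\varphi$ by associativity.

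The key observation is that the hypothesis on $\varphi$ forces the symmetry identity $\varphi(\overline{z}) = \overline{\varphi(z)}$ for all $z \in \D$. To see this, I would define $g(z) = \overline{\varphi(\overline{z})}$; since $\varphi \in H(\D)$, an inspection of the Taylor series (or the Schwarz reflection principle) shows $g \in H(\D)$. By hypothesis, for every $x \in (-1,1)$ we have $\varphi(x) \in (-1,1) \subset \mathbb{R}$, so $g(x) = \overline{\varphi(x)} = \varphi(x)$. Since $g$ and $\varphi$ are analytic on $\D$ and agree on the set $(-1,1)$, which has an accumulation point in $\D$, the identity theorem gives $g \equiv \varphi$. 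The same argument applied to $\psi \in H^\infty$ yields $\psi(\overline{z}) = \overline{\psi(z)}$.

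With these identities in hand, the commutation relations follow by direct calculation. For $f \in H^2$ and $z \in \D$,
\[
(J C_\varphi f)(z) = \overline{f(\varphi(\overline{z}))} = \overline{f(\overline{\varphi(z)})} = (Jf)(\varphi(z)) = (C_\varphi J f)(z),
\]
and similarly
\[
(J T_\psi f)(z) = \overline{\psi(\overline{z})}\,\overline{f(\overline{z})} = \psi(z)(Jf)(z) = (T_\psi J f)(z).
\]
Finally, since $W_{\psi,\varphi} = T_\psi C_\varphi$, we have $J W_{\psi,\varphi} = J T_\psi C_\varphi = T_\psi J C_\varphi = T_\psi C_\varphi J = W_{\psi,\varphi} J$.

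There is no serious obstacle here; the only subtlety is recognizing that "maps $(-1,1)$ into itself" lets us apply the identity theorem to force real Taylor coefficients, from which the commutation with $J$ is an immediate formal consequence.
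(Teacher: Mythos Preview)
Your proof is correct and follows essentially the same approach as the paper: both compute $JC_\varphi f$ and $C_\varphi Jf$ (and likewise for $T_\psi$) and invoke the identity theorem via agreement on $(-1,1)$. The only cosmetic difference is that you first isolate the symmetry identities $\varphi(\overline{z})=\overline{\varphi(z)}$ and $\psi(\overline{z})=\overline{\psi(z)}$ and then deduce commutation, whereas the paper applies the identity theorem directly to the full expressions $(JC_\varphi f)(z)$ and $(C_\varphi Jf)(z)$ for each $f$; this is the same idea, just repackaged.
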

\begin{proof} $JC_{\ph} f = J f(\ph(z)) = \overline{f(\ph(\overline{z}))}$ and $C_{\ph} J f = C_{\ph} \overline{f(\overline{z})} = \overline{f(\overline{\ph(z)})}$. Now, for multiplication operators, $T_\psi J f = \psi(z) \overline{f(\overline{z})}$ and $J T_\psi f = \overline{\psi(\overline{z})}\overline{f(\overline{z})}$. In both situations, the expressions agree on $(-1,1)$. Since they are the same functions on a set with an accumulation point, they equal each other on the entire open disk. \end{proof}

\begin{theorem}\label{unitary} Suppose $b \in (-1,1), \tau = \frac{b- z}{1 - b z}$ and $\zeta = \sqrt{1-b^2} \frac{1}{1-b z}$. Then $T_\zeta C_\tau$ is unitary and self-adjoint; therefore, it is an isometric involution (but not conjugate-linear). Furthermore, $JT_{\zeta}C_{\tau}$ is a conjugation. \end{theorem}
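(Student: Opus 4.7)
The plan is to show that $W := T_\zeta C_\tau$ is self-adjoint via the Cowen adjoint formula, then verify $W^2 = I$ directly (which, combined with self-adjointness, yields unitarity), and finally apply Theorems \ref{conjlinear} and \ref{commute} to conclude that $JW$ is a conjugation.

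For the self-adjointness, write $\tau = (Az+B)/(Cz+D)$ with $A = -1$, $B = b$, $C = -b$, $D = 1$, all real. Theorem \ref{adjoint} gives $C_\tau^* = T_g C_\tau T_h^*$ with $g(z) = 1/(1-bz)$ and $h(z) = 1-bz$ (the Krein adjoint of $\tau$ is $\tau$ itself). Then
\[
W^* = C_\tau^* T_\zeta^* = T_g C_\tau T_h^* T_\zeta^* = T_g C_\tau (T_\zeta T_h)^*.
\]
The crux is that $\zeta(z) h(z) = \sqrt{1-b^2}$ is a real constant, so $T_\zeta T_h = \sqrt{1-b^2}\, I$ and $(T_\zeta T_h)^* = \sqrt{1-b^2}\, I$. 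The expression collapses to $W^* = \sqrt{1-b^2}\, T_g C_\tau = T_\zeta C_\tau = W$.

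For the involution, $\tau \circ \tau = \mathrm{id}$ is the standard identity for this Möbius involution on \D, and a direct substitution gives $\zeta(\tau(z)) = (1-bz)/\sqrt{1-b^2}$, so $\zeta(z)\zeta(\tau(z)) \equiv 1$. Hence $W^2 f(z) = \zeta(z)\zeta(\tau(z)) f(\tau(\tau(z))) = f(z)$, i.e., $W^2 = I$. Combined with $W^* = W$, this yields $W^*W = WW^* = W^2 = I$, so $W$ is unitary; since $W$ is complex-linear and $W^2 = I$, it is an isometric (but not conjugate-linear) involution.

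For the final claim, observe that for $b \in (-1,1)$ the map $\tau$ preserves the diameter $(-1,1)$ and $\zeta$ takes real values on $(-1,1)$, so Theorem \ref{commute} applies to yield $JT_\zeta = T_\zeta J$ and $JC_\tau = C_\tau J$, whence $JW = WJ$. Taking $J_1 := J$ (conjugate-linear) and $J_2 := W$ (linear), both isometric involutions that commute, Theorem \ref{conjlinear} gives that $JW$ is a conjugation. The main obstacle is spotting that $\zeta$ has been chosen precisely so that $\zeta \cdot h$ is the constant $\sqrt{1-b^2}$, which is what makes the Cowen adjoint formula collapse to $W$ itself; once that identity is noticed, every remaining step is a short calculation.
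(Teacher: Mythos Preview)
Your proof is correct, and the route differs from the paper's. Where the paper simply cites \cite{Bourdon} and \cite{wcomp} for the fact that $T_\zeta C_\tau$ is unitary and self-adjoint (noting only that $\zeta = K_{\tau(0)}/\|K_{\tau(0)}\|$), you establish both properties directly: self-adjointness via the Cowen adjoint formula (Theorem~\ref{adjoint}), exploiting the cancellation $\zeta h \equiv \sqrt{1-b^2}$, and the involution property $W^2 = I$ from the explicit identities $\tau\circ\tau = \mathrm{id}$ and $\zeta\cdot(\zeta\circ\tau)\equiv 1$. This makes your argument entirely self-contained within the paper's own toolkit, at the cost of a few extra lines of algebra; the paper's citation-based approach is shorter but leans on the reader knowing the structure theory of unitary weighted composition operators. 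For the final conjugation claim the two arguments coincide: invoke Theorem~\ref{commute} and then Theorem~\ref{conjlinear}. One small remark: as literally stated, Theorem~\ref{commute} asks that $\zeta$ map $(-1,1)$ into itself, which fails here (e.g.\ $\zeta(b)=1/\sqrt{1-b^2}\ge 1$); what its proof actually uses---and what you correctly note---is only that $\zeta$ is real-valued on $(-1,1)$, and the paper's own proof of this theorem makes the same tacit relaxation.
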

\begin{proof} The fact that $T_{\zeta} C_{\tau}$ is unitary and self-adjoint can be found in both \cite{Bourdon} and \cite{wcomp} (the weight $\zeta$ is $K_{\tau(0)}$ divided by $\|K_{\tau(0)}\|$). By Theorem \ref{commute}, since $b$ is real, $J$ and $T_{\zeta}C_{\tau}$ commute. By Theorem \ref{conjlinear}, then, $JT_{\zeta} C_{\tau}$ is a conjugation. \end{proof} 

\begin{theorem}\label{adjoint2} Let $a \in \D, b \in (-1,1)$ with $|a|+|b(1-a)| \leq 1$, and let $\sigma(z) = az + b(1-a)$. Furthermore, let $\ph(z) = \overline{a}z/(1-b(1-\overline{a})z), \psi(z) = 1/(1-b(1-\overline{a})z)$. Then $C_{\sigma}^{*} = T_{\psi} C_{\ph}$. \end{theorem}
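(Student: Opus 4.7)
The plan is to apply the Cowen adjoint formula (Theorem \ref{adjoint}) directly to $\sigma$, and verify that the three resulting ingredients $g$, $h$, and the Krein adjoint match $\psi$, $1$, and $\phi$ respectively. The reality of $b$ will be essential for the conjugates to simplify to the stated form.

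First I would rewrite $\sigma$ as a linear-fractional map in standard form:
\[
\sigma(z) = \frac{az + b(1-a)}{0 \cdot z + 1},
\]
so that the coefficients $(A,B,C,D) = (a,\, b(1-a),\, 0,\, 1)$. Before invoking Cowen's formula I would quickly check that $\sigma$ is an analytic self-map of $\D$: for $z \in \D$,
\[
|\sigma(z)| \leq |a|\,|z| + |b(1-a)| < |a| + |b(1-a)| \leq 1,
\]
(or one applies the maximum principle in the case of equality in the hypothesis), so the Cowen formula is applicable.

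Next, I would compute each of the three ingredients. The Cowen auxiliary function $h(z) = Cz + D = 1$, so $T_h^{\ast} = I$ contributes nothing. For the other auxiliary function,
\[
g(z) = \frac{1}{-\overline{B}z + \overline{D}} = \frac{1}{1 - \overline{b(1-a)}\,z}.
\]
Because $b \in (-1,1)$ is real, $\overline{b(1-a)} = b(1-\overline{a})$, and hence $g(z) = \psi(z)$. For the Krein adjoint,
\[
\frac{\overline{A}z - \overline{C}}{-\overline{B}z + \overline{D}} = \frac{\overline{a}z}{1 - b(1-\overline{a})\,z},
\]
which is precisely $\phi$ in the statement. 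Substituting into Theorem \ref{adjoint} gives $C_{\sigma}^{\ast} = T_{g}\, C_{\phi}\, T_{h}^{\ast} = T_{\psi}\, C_{\phi}$, as desired.

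There is essentially no serious obstacle here; the only point that deserves care is keeping track of the conjugates in the Krein adjoint and exploiting the fact that $b$ is real so that $\overline{b(1-a)} = b(1-\overline{a})$. The hypothesis $|a| + |b(1-a)| \leq 1$ is used only to ensure $\sigma$ is a genuine self-map of $\D$ (and, via the theorem, that $g, h \in H^{\infty}$ and $\phi$ is a self-map), after which the identification is purely formal.
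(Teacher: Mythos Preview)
Your proposal is correct and follows exactly the paper's approach: apply the Cowen adjoint formula (Theorem~\ref{adjoint}) to $\sigma(z) = (az + b(1-a))/(0\cdot z + 1)$ and read off $g = \psi$, $h = 1$, and Krein adjoint $\phi$. The paper's own proof is the single sentence ``By the Cowen adjoint formula found in Theorem~\ref{adjoint}, $C_{\sigma}^{*} = T_{\psi} C_{\phi}$,'' so you have simply written out the details that the paper leaves implicit, including the use of $b \in \mathbb{R}$ to get $\overline{b(1-a)} = b(1-\overline{a})$.
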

\begin{proof} By the Cowen adjoint formula found in Theorem \ref{adjoint}, $C_{\sigma}^{*} = T_{\psi} C_{\phi}$.  \end{proof}

We are now ready to perform some calculations that show $C_\sigma$ is complex symmetric. We are purposefully letting $b$ belong to $(-1,1)$ so that our choice of involution works; however, we will later remove this requirement. 

\begin{theorem}\label{mix} Let $a \in \D, b \in (-1,1)$ with $|a|+|b(1-a)| \leq 1$ and let $\sigma(z) = az + b(1-a)$. Furthermore, let $\ph(z) = \overline{a}z/(1-b(1-\overline{a})z), \psi(z) = 1/(1-b(1-\overline{a})z)$. Then $C_\sigma = JT_\zeta C_\tau T_\psi C_{\ph} JT_\zeta C_\tau$, where $\tau, \zeta$ are as in Theorem \ref{unitary}. \end{theorem}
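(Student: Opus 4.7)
The plan is to verify the operator identity directly by applying both sides to an arbitrary $f \in H^2$ and observing that each side is a weighted composition operator. By Theorem \ref{adjoint2} this identity is equivalent to $C_\sigma = K C_\sigma^* K$ with $K = JT_\zeta C_\tau$ the conjugation from Theorem \ref{unitary}, so it is exactly the complex symmetric relation. Since $K$ is an involution, I could equivalently prove $K C_\sigma = C_\sigma^* K$, but the direct ``sandwich" computation seems cleanest.

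The first step is to push the inner $J$ out past the weight and symbol to collect like pieces. Because $b \in (-1,1)$, both $\tau(z) = (b-z)/(1-bz)$ and $\zeta(z) = \sqrt{1-b^2}/(1-bz)$ have real Taylor coefficients and map $(-1,1)$ into itself, so Theorem \ref{commute} gives $J T_\zeta C_\tau = T_\zeta C_\tau J$. Hence
\[
J T_\zeta C_\tau T_\psi C_\ph J T_\zeta C_\tau \;=\; T_\zeta C_\tau \,(J T_\psi J)(J C_\ph J)\, T_\zeta C_\tau \;=\; T_\zeta C_\tau\, T_{\widetilde\psi} C_{\widetilde\ph}\, T_\zeta C_\tau,
\]
where $\widetilde\psi(z) = \overline{\psi(\bar z)} = 1/(1-b(1-a)z)$ and $\widetilde\ph(z) = \overline{\ph(\bar z)} = az/(1-b(1-a)z)$ (using $b \in \mathbb{R}$). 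This replaces $\bar a$ by $a$ everywhere, which is precisely what is needed for the composition to produce $\sigma(z) = az + b(1-a)$.

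Applying the composed operator to $f$ and tracking the action of each factor, the statement reduces to the two scalar identities
\[
\tau\bigl(\widetilde\ph(\tau(z))\bigr) \;=\; az + b(1-a), \qquad \zeta(z)\,\widetilde\psi(\tau(z))\,\zeta\bigl(\widetilde\ph(\tau(z))\bigr) \;=\; 1.
\]
For the first, setting $u = b-z$ and simplifying $\widetilde\ph(\tau(z))$ collapses the denominator to $(1-b^2) + abu$, after which $\tau$ undoes the Möbius twist to leave $b - au = az + b(1-a) = \sigma(z)$. For the second, the nontrivial denominator $(1-b^2)+abu$ appearing in $\widetilde\psi(\tau(z))$ cancels against the reciprocal factor produced by $\zeta(\widetilde\ph(\tau(z)))$, and what remains is $\sqrt{1-b^2}/(1-bz) \cdot (1-bz)/\sqrt{1-b^2} = 1$.

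The only real obstacle is bookkeeping in the algebra: the weights and symbols all share the denominator $1 - b(1-a)z$ (or a $\tau$-pulled-back version of it), so the cancellations must line up precisely. I would organize the computation by first writing $\widetilde\ph(\tau(z))$ and $1 - b\widetilde\ph(\tau(z))$ in the common-denominator form $\ast/((1-b^2)+abu)$, after which both identities fall out of a single common algebraic reduction. Once these identities are in hand, the equality of weighted composition operators $C_\sigma = T_\zeta C_\tau T_{\widetilde\psi} C_{\widetilde\ph} T_\zeta C_\tau$ is immediate, completing the proof.
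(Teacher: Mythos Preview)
Your approach is correct and essentially the same as the paper's: both reduce the operator identity to the two scalar identities $\tau(\widetilde\ph(\tau(z))) = \sigma(z)$ and $\zeta(z)\,\widetilde\psi(\tau(z))\,\zeta(\widetilde\ph(\tau(z)))=1$, verified by direct algebra. Your preliminary step of commuting $J$ past $T_\zeta C_\tau$ (via Theorem~\ref{commute}) to replace $\bar a$ by $a$ and eliminate the conjugates $\bar z$ is a tidy organizational simplification the paper does not make, but otherwise the computations coincide.
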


\begin{proof} We show this by direct calculation: 

\begin{align*}
JT_\zeta C_\tau T_\psi C_\varphi JT_\zeta C_\tau f(z) &= 
JT_\zeta C_\tau  T_\psi C_\varphi \overline{\zeta(\overline{z})} \;\overline{f(\tau(\overline{z}))} \\ 
&= JT_\zeta C_\tau \psi(z) \overline{\zeta(\overline{\varphi(z)})} \;\overline{f(\tau(\overline{\varphi(z)}))} \\ 
&= J \zeta(z)  \psi(\tau(z)) \overline{\zeta(\overline{\varphi(\tau(z))})} \;\overline{f(\tau(\overline{\varphi(\tau(z))}))} \\
&=  \overline{\zeta(\overline{z})} \;  \overline{\psi(\tau(\overline{z}))} 
\zeta(\overline{\varphi(\tau(\overline{z}))}) \;f(\tau(\overline{\varphi(\tau(\overline{z}))})). 
\end{align*}

First, we check the compositional symbol. For clarity, we will work this out one composition at a time. Recall that we are assuming $b$ is real. 

\begin{align*}
\tau(\overline{z}) &= \frac{b-\overline{z}}{1-b\overline{z}} 
\end{align*}
\begin{align*}
\overline{\ph(\tau(\overline{z}))} &= \overline{\ph\left(\frac{b-\overline{z}}{1-b\overline{z}}\right)}  \\
&= \overline{\left( \frac{\overline{a}\frac{b-\overline{z}}{1-b\overline{z}}}{1-b(1-\overline{a})\frac{b-\overline{z}}{1-b\overline{z}}} \right) } \\
&= \frac{a\frac{b-z}{1-bz}}{1-b(1-a)\frac{b-z}{1-bz}} \\
&= \frac{a(b-z)}{1-bz-b(1-a)(b-z)} \\
&= \frac{ab-az}{1-b^2+ab^2-abz}
\end{align*}
\begin{align*}
\tau(\overline{\ph(\tau(\overline{z}))}) &=  \frac{b-\frac{ab-az}{1-b^2+ab^2-abz}}{1-b\frac{ab-az}{1-b^2+ab^2-abz}}\\
 &= \frac{b-b^3+ab^3-ab^2z-ab+az}{1-b^2+ab^2-abz-ab^2+abz}\\
&= \frac{b-b^3+ab^3-ab^2z-ab+az}{1-b^2} \\
&= \frac{(1-b^2)(az+b(1-a))}{1-b^2} \\
&= az + b(1-a) = \sigma
\end{align*}

Now, we will evaluate each of the three multiplicative factors separately, and see that they simplify to $1$:
\begin{align*}
\overline{\zeta(\overline{z})} &= \frac{\sqrt{1-b^2}}{1-bz} 
\end{align*}
\begin{align*}
\overline{\psi(\tau(\overline{z}))} &= \overline{\psi\left(\frac{b-\overline{z}}{1-b\overline{z}}\right)} \\
&= \overline{\left(\frac{1}{1-b(1-\overline{a})\frac{b-\overline{z}}{1-b\overline{z}}}\right)} \\
&= \overline{\left(\frac{1-b\overline{z}}{1-b\overline{z}-b(1-\overline{a})(b-\overline{z})}\right)} \\
&= \frac{1-bz}{1-bz-b(1-a)(b-z)} \\
&= \frac{1-bz}{1-bz-b(b-ab-z+az)} \\
&= \frac{1-bz}{1-bz-b^2+ab^2+bz-abz} \\
&= \frac{1-bz}{1-b^2+ab^2-abz}
\end{align*}

For the third multiplication factor, recall that we have already simplified $\overline{\ph(\tau(\overline{z}))}$ above:
\begin{align*}
\zeta(\overline{\ph(\tau(\overline{z}))}) &= \zeta\left(\frac{ab-az}{1-b^2+ab^2-abz}\right) \\
&= \sqrt{1-b^2} \frac{1}{1-b\frac{ab-az}{1-b^2+ab^2-abz}} \\
&= \sqrt{1-b^2}\frac{1-b^2+ab^2-abz}{1-b^2+ab^2-abz-ab^2+abz} \\
&= \sqrt{1-b^2}\frac{1-b^2+ab^2-abz}{1-b^2} \\
&= \frac{1-b^2+ab^2-abz}{\sqrt{1-b^2}}
\end{align*}

Putting these together, we have
\begin{align*}
\overline{\zeta(\overline{z})} \;  \overline{\psi(\tau(\overline{z}))} 
\zeta(\overline{\varphi(\tau(\overline{z}))}) &= \\ 
\left(\frac{\sqrt{1-b^2}}{1-bz}\right) \left(\frac{1-bz}{1-b^2+ab^2-abz}\right)\left(\frac{1-b^2+ab^2-abz}{\sqrt{1-b^2}}\right) &= 1 
\end{align*}
and we are finished. 
\end{proof}

\begin{cor}\label{breal} Let $a \in \D, b \in (-1,1)$ with $|a|+|b(1-a)| \leq 1$ and let $\sigma(z) = az + b(1-a)$. Then $C_{\sigma}$ is complex symmetric. \end{cor}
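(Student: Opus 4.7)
The plan is to observe that the corollary is essentially an assembly of the three preceding theorems and requires no new computation. Recall that an operator $T$ is complex symmetric with conjugation $J_0$ exactly when $T = J_0 T^* J_0$. So I only need to identify a conjugation $J_0$ and verify this identity for $T = C_\sigma$.

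First, I would name $J_0 := JT_\zeta C_\tau$, where $\tau$ and $\zeta$ are the functions associated to the real parameter $b \in (-1,1)$ in Theorem \ref{unitary}. Because $b$ is real, Theorem \ref{unitary} directly gives that $J_0$ is a conjugation on $H^2$. This is where the real-fixed-point hypothesis is used; the extension to complex $b$ will be handled in a later result, outside the scope of this corollary.

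Next, I would apply Theorem \ref{adjoint2} to rewrite $T_\psi C_\ph$ as $C_\sigma^*$, using exactly the $\psi$ and $\ph$ defined in its statement (which match those used in Theorem \ref{mix}). Substituting this identification into the factorization provided by Theorem \ref{mix} yields
\begin{equation*}
C_\sigma \;=\; JT_\zeta C_\tau \,(T_\psi C_\ph)\, JT_\zeta C_\tau \;=\; J_0\, C_\sigma^*\, J_0,
\end{equation*}
which is precisely the condition that $C_\sigma$ is complex symmetric with conjugation $J_0$.

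Since the main obstacles, namely verifying the conjugation property of $J_0$ and grinding through the compositional and multiplicative identities that yield $C_\sigma = J_0 T_\psi C_\ph J_0$, have already been dispatched in Theorems \ref{unitary} and \ref{mix}, the proof of the corollary reduces to this two-line substitution. No further calculation or new idea is required at this stage.
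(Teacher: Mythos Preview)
Your proposal is correct and matches the paper's own proof essentially line for line: the paper also cites Theorem \ref{unitary} to conclude $JT_\zeta C_\tau$ is a conjugation, invokes Theorem \ref{adjoint2} to identify $T_\psi C_\ph$ with $C_\sigma^{*}$, and then substitutes into the factorization from Theorem \ref{mix} to obtain $C_\sigma = (JT_\zeta C_\tau) C_\sigma^{*} (JT_\zeta C_\tau)$.
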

\begin{proof} By Theorem \ref{unitary}, $J T_\zeta C_\tau $ is a conjugation. By Theorem \ref{adjoint2}, $C_{\sigma}^{*} = T_\psi \C$. By Theorem \ref{mix}, $C_\sigma = J T_\zeta C_\tau T_\psi C_{\ph} J T_\zeta C_\tau  = J T_\zeta C_\tau C_{\sigma}^{*} J T_\zeta C_\tau$. \end{proof}

We now use unitarily equivalence to allow $b$ to be complex.

\begin{cor}\label{bdisk} Let $a, b \in \D$ with $|a|+|b(1-a)| \leq 1$ and let $\sigma(z) = az + b(1-a)$.Then $C_{\sigma}$ is complex symmetric.\end{cor}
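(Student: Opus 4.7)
The approach is a straightforward unitary-equivalence argument reducing to the real-$b$ case already handled in Corollary \ref{breal}. Write $b = e^{i\theta}|b|$ with $\theta \in \mathbb{R}$, set $b_0 = |b| \in [0,1)$, let $\rho(z) = e^{-i\theta}z$, and define $\sigma_0(z) = az + b_0(1-a)$. Since $\rho$ is a disk automorphism, $C_\rho$ is unitary on $H^2$ with $C_\rho^{-1} = C_\rho^*$. A direct computation gives $\rho \circ \sigma_0 \circ \rho^{-1}(z) = az + e^{i\theta} b_0 (1-a) = \sigma(z)$, and hence $C_\sigma = C_\rho C_{\sigma_0} C_\rho^*$.

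The hypothesis transfers: because $|b_0(1-a)| = |b||1-a| = |b(1-a)|$, the condition $|a| + |b_0(1-a)| \leq 1$ holds, so Corollary \ref{breal} applies to $\sigma_0$ and yields a conjugation $K$ on $H^2$ with $C_{\sigma_0} = K C_{\sigma_0}^* K$. Define $K' := C_\rho K C_\rho^*$. Then $K'$ is conjugate-linear (since $C_\rho$ and $C_\rho^*$ are linear and $K$ is conjugate-linear), an isometry (as a product of isometries), and satisfies $(K')^2 = C_\rho K C_\rho^* C_\rho K C_\rho^* = C_\rho K^2 C_\rho^* = I$. Thus $K'$ is a conjugation.

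Using $C_\sigma^* = C_\rho C_{\sigma_0}^* C_\rho^*$, one obtains $K' C_\sigma^* K' = C_\rho K C_{\sigma_0}^* K C_\rho^* = C_\rho C_{\sigma_0} C_\rho^* = C_\sigma$, so $C_\sigma$ is complex symmetric with respect to $K'$. There is no genuine obstacle here; the only point requiring care is that $K'$ must be built by flanking $K$ with $C_\rho$ and its adjoint (rather than $C_\rho$ on both sides), which is what preserves both conjugate-linearity and the involution identity under the rotation.
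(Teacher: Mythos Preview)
Your argument is correct and is essentially the same as the paper's: both reduce to the real-$b$ case of Corollary~\ref{breal} by conjugating with a rotation unitary $C_{e^{i\theta}z}$, using that complex symmetry is preserved under unitary equivalence (your write-up is in fact more explicit than the paper's in exhibiting the new conjugation $K'=C_\rho K C_\rho^*$). One small slip: with $\rho(z)=e^{-i\theta}z$ one actually has $\rho^{-1}\circ\sigma_0\circ\rho=\sigma$ rather than $\rho\circ\sigma_0\circ\rho^{-1}=\sigma$, but since $C_\alpha C_\beta=C_{\beta\circ\alpha}$ your identity $C_\sigma=C_\rho C_{\sigma_0}C_\rho^{*}$ is nonetheless correct and nothing downstream is affected.
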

\begin{proof} By Corollary \ref{breal} , $C_{\sigma_{1}}$ is complex symmetric, where $\sigma_{1}(z) = az + b_1(1-a)$ with $a \in \D, b_1 \in (-1,1)$ and $|a|+|b_{1}(1-a)| \leq 1$. Let $U_{\theta}$ be defined by $U_{\theta} f = C_{e^{i\theta}z} f = f(e^{i\theta}z)$ for some angle $\theta$ and note that $U_{\theta}$ is a unitary operator with inverse $U_{-\theta}$. Then $U_{-\theta} C_{\sigma_{1}} U_{\theta}$ is unitarily equivalent to $ C_{\sigma}$ where $\sigma = az + e^{i\theta}b_{1}(1-a)$, so it is also complex symmetric. Letting $b = b_{1}e^{i\theta}$, since this is true for any angle $\theta$, the conclusion follows. \end{proof}

%\begin{cor}\label{adisk} Let $a, b \in \D$

The work above shows that if $\sigma$ is a linear symbol with interior fixed point, $C_{\sigma}$ is complex symmetric on $H^2$. However, as we will see below, that work also shows that $C_{\ph}$ is complex symmetric, where \ph\ is the Krein adjoint of $\sigma$. 

\begin{cor}\label{phi} Let $a, b \in \D$ with $|a|+|b(1-a)| \leq 1$. Let $\ph(z) = az/(1-b(1-a)z), \psi(z) = 1/(1-b(1-a)z)$. Then \W\ is complex symmetric.\end{cor}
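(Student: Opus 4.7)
The plan is to realize $W_{\psi,\ph}$ as the Hilbert-space adjoint of a composition operator that Corollary \ref{bdisk} already handles, and then to invoke the general fact that the adjoint of a complex symmetric operator is itself complex symmetric with the same conjugation. This last fact is immediate from the definition: if $T = JT^{*}J$ with $J$ a conjugation, then left- and right-multiplying by $J$ and using $J^{2}=I$ yields $T^{*} = JTJ = J(T^{*})^{*}J$.

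The natural candidate for the preimage is $\tilde\sigma(z) = \overline{a}\,z + \overline{b}(1-\overline{a})$, which has the shape $\tilde a z + \tilde b(1-\tilde a)$ with $\tilde a = \overline{a}$ and $\tilde b = \overline{b}$. Since conjugation preserves absolute value,
\[
|\tilde a| + |\tilde b(1-\tilde a)| \;=\; |a| + |b(1-a)| \;\leq\; 1,
\]
so Corollary \ref{bdisk} applies and $C_{\tilde\sigma}$ is complex symmetric on $H^{2}$. It therefore remains only to verify that $C_{\tilde\sigma}^{*} = W_{\psi,\ph}$.

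This identification is a direct application of the Cowen adjoint formula (Theorem \ref{adjoint}) to $\tilde\sigma$: writing $\tilde\sigma = (Az+B)/(Cz+D)$ with $A=\overline{a}$, $B=\overline{b}(1-\overline{a})$, $C=0$, $D=1$, the formula returns $h(z)=1$, $g(z) = 1/(-\overline{B}z + \overline{D}) = 1/(1-b(1-a)z) = \psi(z)$, and Krein adjoint $(\overline{A}z - \overline{C})/(-\overline{B}z + \overline{D}) = az/(1-b(1-a)z) = \ph(z)$. Hence $C_{\tilde\sigma}^{*} = T_{\psi}C_{\ph} = W_{\psi,\ph}$, and complex symmetry of $W_{\psi,\ph}$ follows.

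The one point requiring care is choosing $\tilde\sigma$ so that Cowen's formula produces exactly the $\psi$ and $\ph$ named in the statement (rather than their conjugated counterparts appearing in Theorem \ref{adjoint2}); the conjugates in the coefficients of $\tilde\sigma$ have to be tracked precisely, but once this is done no substantial obstacle remains.
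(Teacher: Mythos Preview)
Your proof is correct and takes essentially the same approach as the paper: both identify $W_{\psi,\ph}$ with the adjoint of $C_{\tilde\sigma}$ for $\tilde\sigma(z)=\overline{a}z+\overline{b}(1-\overline{a})$, apply Corollary~\ref{bdisk} to $C_{\tilde\sigma}$, and use that complex symmetry passes to adjoints. You simply spell out the Cowen adjoint computation and the adjoint-stability fact more explicitly than the paper does.
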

\begin{proof} $\Ws = C_{\sigma}$, where $\sigma(z) = \overline{a}z + \overline{b}(1-\overline{a})$, is complex symmetric by Corollary \ref{bdisk}. Since the adjoint of \W\ is complex symmetric, \W\ is also complex symmetric. \end{proof}

\begin{cor} Let $a, b \in \D$ with $|a|+|b(1-a)| \leq 1$ and let $\ph(z) = az/(1-b(1-a)z)$. Then $C_{\ph}$ is complex symmetric. \end{cor}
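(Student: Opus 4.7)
The plan is to construct a conjugation for $C_\ph$ by adapting the ideas from Corollary \ref{phi}. First, I would compute $C_\ph^*$ explicitly via the Cowen adjoint formula (Theorem \ref{adjoint}). Writing $\ph(z) = (az + 0)/(-b(1-a)z + 1)$ and applying the formula yields $C_\ph^* = C_\sigma T_h^{*}$, where $\sigma(z) = \bar{a}z + \bar{b}(1-\bar{a})$ and $h(z) = 1 - b(1-a)z = 1/\psi$. Since $\sigma$ lies in the family of linear maps handled by Corollary \ref{bdisk}, $C_\sigma$ is already known to be complex symmetric; the task is to exhibit a conjugation for $C_\ph$ that matches the full right-hand side, including the extra $T_h^{*}$ factor.

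Next, I would reduce to the case $b \in (-1,1)$ via the unitary equivalence $U_\theta f(z) = f(e^{i\theta}z)$, as in Corollary \ref{bdisk}, since complex symmetry is preserved under unitary conjugation. In the real case, $J$ commutes with the relevant Toeplitz and composition operators by Theorem \ref{commute}, which simplifies the calculation considerably. I would then propose a candidate conjugation $J'$, in the spirit of Theorems \ref{unitary} and \ref{mix}, and verify $J' C_\ph J' = C_\ph^*$ by a direct calculation paralleling the proof of Theorem \ref{mix}: compute the composition symbol part and the weight part separately, and check that they assemble into $C_\sigma T_h^{*}$. Once the real case is handled, conjugating by $U_\theta$ extends the result to all $b \in \D$.

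The main obstacle will be selecting the correct $J'$: the candidate $JT_\zeta C_\tau$ that worked for both $C_\sigma$ and $W_{\psi,\ph}$ fails here. Indeed, a direct computation paralleling Theorem \ref{mix} shows that $(JT_\zeta C_\tau) C_\ph (JT_\zeta C_\tau) = T_\rho C_\sigma$ with $\rho(z) = (1-b\sigma(z))/(1-bz)$, a pure weighted composition operator, whereas $C_\ph^* = C_\sigma T_h^{*}$ carries an additional rank-one contribution coming from $T_h^{*}$. Producing this contribution through conjugation, for instance by composing $JT_\zeta C_\tau$ with a carefully chosen Toeplitz factor related to $\psi$ or $h$ (or by changing the underlying involution $\tau$), while retaining the conjugate-linear, isometric, and involutive properties of $J'$, is where the real work lies.
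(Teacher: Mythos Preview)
Your proposal correctly identifies the difficulty but does not resolve it. You end precisely at the obstacle: the candidate $JT_\zeta C_\tau$ produces $T_\rho C_\sigma$ rather than $C_\sigma T_h^{*}$, and you offer no concrete modification of $J'$ that would generate the missing rank-one piece from $T_h^{*}$ while remaining an isometric conjugate-linear involution. As written, this is a plan with an acknowledged gap, not a proof.

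The paper sidesteps this obstacle entirely by exploiting a structural feature you have not used: $\ph(0)=0$. Because of this, $zH^2$ is a reducing subspace for $C_\ph$, and one has the decomposition $C_\ph = I \oplus C_\ph\big|_{zH^2}$ on $H^2 = \mathbb{C}\oplus zH^2$. The key observation is that the shift $T_z:H^2\to zH^2$ is unitary and intertwines $a\,W_{\psi,\ph}$ with $C_\ph\big|_{zH^2}$; indeed, $\ph(z)=az\,\psi(z)$, so for $f\in H^2$,
\[
C_\ph(T_z f)(z)=\ph(z)\,f(\ph(z))=az\,\psi(z)\,f(\ph(z))=T_z\bigl(a\,W_{\psi,\ph}f\bigr)(z).
\]
Since $W_{\psi,\ph}$ is complex symmetric by Corollary~\ref{phi}, so is $a\,W_{\psi,\ph}$, hence $C_\ph\big|_{zH^2}$ is complex symmetric via some conjugation $J_2$ on $zH^2$. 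Taking any conjugation $J_1$ on the constants, $J_1\oplus J_2$ is a conjugation for $C_\ph$. This avoids altogether the need to manufacture the $T_h^{*}$ term inside a single global conjugation.
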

\begin{proof} First, recall that $zH^2$ is a reducing subspace for $C_{\ph}$. Let $\psi(z) = 1/(1-b(1-a)z)$. By Corollary \ref{phi}, \W\ is complex symmetric and therefore so is $a\W$. By the unitary operator $T_{z}: \Ht \rightarrow z\Ht$, $a\W$ is unitarily equivalent to $\C|_{z\Ht}$. Therefore, $\C|_{z\Ht}$ is complex symmetric by some conjugation $J_2: zH^2 \rightarrow zH^2$. On $H^2$, $\C = I \oplus \C|_{z\Ht}$ and $I$ is trivially complex symmetric by any conjugation, say $J_1$, on the constants. Then $\C$ is complex symmetric on $H^2$ by the conjugation $J_1 \oplus J_2$.   \end{proof}

\section{Further Questions}

\begin{enumerate}
\item We have shown several operators $C_{\ph}$ to be complex symmetric when $\ph$ is linear-fractional with interior fixed point and no fixed point on the boundary. However, there are other examples, e.g. $\ph = 1/(3-z)$. Are these operators also complex symmetric?
\item Are there composition operators on $H^2$ that are complex symmetric, yet whose symbols are not linear fractional?
\end{enumerate}

\section*{References}

\end{document}